\newtheorem{theorem}{Theorem}[section]
\newtheorem{corollary}[theorem]{Corollary}
\newtheorem{examples}[theorem]{Examples}
\newtheorem{definition}[theorem]{Definition}
\newtheorem{example}[theorem]{Example}
\newtheorem{lemma}[theorem]{Lemma}
\newtheorem{proposition}[theorem]{Proposition}
\newtheorem{remarks}[theorem]{Remarks}
\newcommand{\nn}{\mathbb{N}}
\newcommand{\vf}{\varphi}
\font\msbm=msbm10 at 12pt
\newcommand{\ZZ}{\mbox{\msbm Z}}
\newcommand{\F}{\mbox{\msbm F}}
\begin{document}

\title{Scalar products  and  Left LCD codes }
\author{Nabil Bennenni}
\address{USTHB, Facult\'{e} de Math\'{e}matiques, Laboratoire ATN,\\
	BP 32 El Alia Bab Ezzouar Algiers Algeria\\
	email: nabil.bennenni@gmail.com}

\author{ Andr\'e Leroy}
\address{Univ. Artois, UR 2462, Laboratoire de Mathématiques de Lens (LML), F-62300 Lens, France\\
	email: andre.leroy@univ-artois.fr}



\date{}

\begin{abstract}
 In this article, we introduce new scalar products over  finite rings via additive isomorphisms.  This allows us to define new notions of right (respectively left) orthogonal codes, that are not necessarily linear.  This leads to definitions of right (resp. left) dual codes and left LCD codes similar to the classical LCD codes.  
 Furthermore, we provide necessary and sufficient conditions for the existence of these codes.
\end{abstract}

\maketitle

{\bf Key words}: Scalar products, right and left dual codes, left-LCD codes.

{\bf MSC 2020}: 94B05, 16S50.

\section{Introduction}
Linear complementary dual codes (LCD) were initially introduced by Massey \cite{LCD2} in 1992, with the intention of using them for the so-called two-user binary adder channel. Massey proved that the use of an LCD code solves some of the decodability difficulties. LCD codes  play an important role in practical applications in particular, against side-channel and fault injection attacks; for more details cf. \cite{LCD4}.\\
In this paper, we focus on the class of left LCD  codes (left  Complementary Dual codes) which is important because of their connection with quantum error correction and quantum computing \cite{LCD5}.\\
Skew triangular matrix rings presented in \cite {LCD1} gave us the opportunity to create new dot products, which is extremely beneficial for creating left and right dual codes different from the classic dual codes. In addition, we stipulated that the codes must be left-LCD codes (left-linear Complementary Dual codes).\\
This paper is organized as follows. In Section 2, we define a new product on a matrix ring $M_n(R)$ via the action of an automorphism $\theta \in Aut(R)$.  This was inspired by \cite{LCD1} where a similar construction was defined on upper triangular matrix rings.
We also introduce new dot products and their left and right orthogonality relations.  We characterize self-dual codes with respect to this new dot product.   We study the connections between some subsets of $R^n$ arising from these definitions.
In Section 3, we study a necessary and sufficient condition for a left linear code to be a left LCD code (left linear Complementary Dual codes).  We give examples of the best known left LCD codes obtained in this way using $\mathbb{F}_4$, $\mathbb{F}_8$, $\mathbb{F}_9$  or $\mathbb{F}_{16}$ for $R$ and the Frobenius automorphism for $\theta$.

\section{New products Via additive isomorphisms}

We will define a new product on a matrix ring $M_n(R)$ via the action of an automorphism $\theta \in Aut(R)$.   Let us start with a very general statement.
\begin{lemma}
\label{Lemme general construction of new product}
	Let $R,+,.$ be a ring and $S,+$  be an additive, abelian group such that $\varphi: R,+\longrightarrow S,+$ is an isomorphism of additive groups.  Then the following multiplication $*$ gives $S$ a ring structure.
	$$
	For \; s_1,s_2 \in S, \; s_1*s_2=\vf(\vf^{-1}(s_1)\vf^{-1}(s_2))
	$$
	The map $\vf:R \rightarrow S$ is therefore a ring isomorphism.
\end{lemma}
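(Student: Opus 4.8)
The plan is to exploit the fact that the multiplication $*$ has been defined precisely so as to make $\varphi$ multiplicative, and then to transport every ring axiom from $R$ to $S$ through the bijection $\varphi$. First I would record the single identity that drives the whole argument. For any $r_1,r_2\in R$, put $s_i=\varphi(r_i)$, so that $\varphi^{-1}(s_i)=r_i$; then by definition
$$\varphi(r_1)*\varphi(r_2)=\varphi\bigl(\varphi^{-1}(s_1)\varphi^{-1}(s_2)\bigr)=\varphi(r_1 r_2).$$
Note in passing that $*$ is well defined because $\varphi$ is a bijection, so $\varphi^{-1}$ is an honest function. This identity says exactly that $\varphi$ carries the product of $R$ to the product $*$ of $S$, and everything else is a consequence of it together with the additivity of $\varphi$.

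Next I would verify the ring axioms for $(S,+,*)$. The pair $(S,+)$ is already an abelian group by hypothesis, so only associativity of $*$ and the two distributive laws remain (plus the unit, if one works with unital rings). Each follows by pushing the corresponding axiom of $R$ forward along $\varphi$. For associativity, writing $s_i=\varphi(r_i)$ and applying the displayed identity twice,
$$(s_1*s_2)*s_3=\varphi(r_1 r_2)*\varphi(r_3)=\varphi\bigl((r_1 r_2)r_3\bigr)=\varphi\bigl(r_1(r_2 r_3)\bigr)=s_1*(s_2*s_3),$$
the central equality being associativity in $R$. For left distributivity one combines the identity with the additivity of $\varphi$,
$$s_1*(s_2+s_3)=\varphi(r_1)*\varphi(r_2+r_3)=\varphi\bigl(r_1(r_2+r_3)\bigr)=\varphi(r_1 r_2+r_1 r_3)=s_1*s_2+s_1*s_3,$$
and right distributivity is entirely symmetric. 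If $R$ is unital, then $\varphi(1_R)$ is a two-sided identity for $*$, since $\varphi(1_R)*\varphi(r)=\varphi(1_R r)=\varphi(r)$ and likewise on the other side.

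Finally I would conclude that $\varphi$ is a ring isomorphism. By hypothesis $\varphi$ is a bijective homomorphism of additive groups, and the identity from the first step shows it respects multiplication; hence $\varphi$ is a ring homomorphism, and being bijective it is a ring isomorphism. I do not expect any genuine obstacle here, since the whole statement is a transport of structure along a bijection. The only thing to be careful about is the bookkeeping: keeping track of which axioms are inherited for free from the given group structure on $(S,+)$ versus those that must be checked, and consistently translating between each $s_i$ and its preimage $r_i=\varphi^{-1}(s_i)$.
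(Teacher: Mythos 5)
Your proof is correct and follows essentially the same route as the paper's: a direct verification of the ring axioms, transporting them from $R$ to $S$ along the bijection $\varphi$. The only difference is organizational --- you isolate the multiplicativity identity $\varphi(r_1)*\varphi(r_2)=\varphi(r_1r_2)$ once and reuse it, whereas the paper re-expands the definition of $*$ in each computation; your write-up also covers both distributive laws and both sides of the unit, which the paper only half-states.
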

\begin{proof}
Let $R,+,.$ be a ring and $S,+$ be an additive (=abelian) group such that $\varphi: R,+\longrightarrow S,+$ is an isomorphism of additive groups.  By definition, we  have $s_1*s_2=\vf(\vf^{-1}(s_1)\vf^{-1}(s_2)).$  We compute
\begin{align*}
s_1*(s_2*s_3)&=\vf(\vf^{-1}(s_1)\vf^{-1}(s_2*s_3))\\
&=\vf(\vf^{-1}(s_1)\vf^{-1}(\vf(\vf^{-1}(s_2)\vf^{-1}(s_3))))\\
&=\vf(\vf^{-1}(s_1)(\vf^{-1}(s_2)\vf^{-1}(s_3)))\\
&=\vf((\vf^{-1}(s_1)\vf^{-1}(s_2))\vf^{-1}(s_3))\\
&=(s_1*s_2)*s_3.
\end{align*}
\begin{align*}
s_1*(s_2+s_3)&=\vf(\vf^{-1}(s_1)\vf^{-1}(s_2+s_3))\\
&=\vf(\vf^{-1}(s_1)\vf^{-1}(s_2)+\vf^{-1}(s_1)\vf^{-1}(s_3))\\
&=\vf(\vf^{-1}(s_1)\vf^{-1}(s_2))+\vf(\vf^{-1}(s_1)\vf^{-1}(s_3))\\
&=s_1*s_2+s_1*s_3.
\end{align*}
\begin{align*}
s_1*\vf(1_R)&=\vf(\vf^{-1}(s_1)\vf^{-1}(\vf(1)))\\
&=\vf(\vf^{-1}(s_1)1)=s_1.
\end{align*}
\begin{align*}
\vf(r_1r_2)&=\vf(\vf^{-1}(s_1)\vf^{-1}(s_2))\\
&=s_1*s_2=\vf(r_1)\vf(r_2).
\end{align*}
\end{proof}

Before coming to the main focus of our paper, let us apply this lemma and give an example:

\begin{example}
	\begin{enumerate}
		\item Consider $\ZZ/n\ZZ$ and let 
		$m\in \{1,\dots ,n\}$ be such that $m$ and $n$ are coprime.  The map $\vf: \ZZ/n\ZZ,+\longrightarrow \ZZ/n\ZZ+$ defined by $\vf(1+n\ZZ)=m+n\ZZ$ is an isomorphism of additive structure.  This gives a new ring structure on $\ZZ/n\ZZ$ where the unity will be $m+n\ZZ$.
		\item  Consider $\ZZ/8\ZZ$ and the map $\vf: \ZZ/8\ZZ,+,*\longrightarrow \ZZ/8\ZZ,+,*$ defined by $\vf(\ZZ/8\ZZ)=3\times (\ZZ/8\ZZ)$.
		 $\overline{2}*\overline{3}=\vf(\vf^{-1}(\overline{2})\vf^{-1}(\overline{3}))=\vf(\overline{6}.\overline{1})=2$.\\
		$I(\ZZ/8\ZZ)=2.(\ZZ/8\ZZ)$, $\vf(I(\ZZ/8\ZZ))=\vf(2.(\ZZ/8\ZZ))=\vf(\overline{2})*\ZZ/8\ZZ=(\overline{6}*(\ZZ/8\ZZ).$
	\end{enumerate}

\end{example}

We can apply this lemma to a matrix ring $M_n(R)$ and an automorphism $\theta$ of $R,+$
(not necessarily a ring automorphism).   We get an additive isomorphism, denoted $\vf:M_n(R)\longrightarrow M_n(R)$, by setting, for $A=(A_{ij})\in M_n(R)$, $\vf(A)_{ij}=\theta^{i-1}(A_{ij})$.   From $\varphi$ we can create a new product on $M_n(R)$.  We describe this product in Theorem \ref{New Product on Matrices}.

\begin{theorem}
\label{New Product on Matrices}
	Let $\theta\in Aut(R)$, and $\vf$ the map defined above.   Then the product $*$ defined on $M_n(R),+$ satisfies, for $A=(a_{ij})$ and $B=(b_{ij})$ we have
	$$
	(A*B)_{ij}=\sum_{k=1}^n a_{ik}\theta^{i-k}(b_{kj})
	$$
\end{theorem}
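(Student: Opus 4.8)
The plan is to unwind the definition of $*$ supplied by Lemma \ref{Lemme general construction of new product}, namely $A*B=\vf(\vf^{-1}(A)\vf^{-1}(B))$ where the inner juxtaposition is the ordinary matrix product, and then simply to read off the $(i,j)$-entry. First I would pin down $\vf^{-1}$ explicitly. Since $\vf$ acts on the $i$-th row of a matrix by applying $\theta^{i-1}$ entrywise and $\theta$ is invertible, its inverse acts on the $i$-th row by $\theta^{-(i-1)}$; that is, $\vf^{-1}(C)_{ij}=\theta^{1-i}(C_{ij})$. This is genuinely a two-sided inverse because $\theta^{i-1}\theta^{1-i}=\mathrm{id}$ on every entry, so no additional argument is needed here.

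Next I would compute the ordinary matrix product $P:=\vf^{-1}(A)\vf^{-1}(B)$ entrywise. Writing $\vf^{-1}(A)_{ik}=\theta^{1-i}(a_{ik})$ and $\vf^{-1}(B)_{kj}=\theta^{1-k}(b_{kj})$, matrix multiplication gives
$$P_{ij}=\sum_{k=1}^n \theta^{1-i}(a_{ik})\,\theta^{1-k}(b_{kj}).$$
Applying $\vf$ then multiplies the $i$-th row by $\theta^{i-1}$, so that $(A*B)_{ij}=\theta^{i-1}(P_{ij})$.

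The final step is to push $\theta^{i-1}$ through the sum and through each product. Additivity of $\theta^{i-1}$ distributes it across the $k$-sum, and this is exactly where the hypothesis $\theta\in Aut(R)$ (a ring, not merely additive, automorphism) is used: multiplicativity yields
$$\theta^{i-1}\bigl(\theta^{1-i}(a_{ik})\,\theta^{1-k}(b_{kj})\bigr)=\theta^{i-1}\theta^{1-i}(a_{ik})\cdot\theta^{i-1}\theta^{1-k}(b_{kj})=a_{ik}\,\theta^{i-k}(b_{kj}),$$
where I have used $\theta^{i-1}\theta^{1-i}=\mathrm{id}$ on the $a$-factor and $\theta^{i-1}\theta^{1-k}=\theta^{i-k}$ on the $b$-factor. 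Summing over $k$ gives the claimed formula.

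I expect the only real obstacle to be the careful bookkeeping of the exponents of $\theta$: the cancellation $\theta^{i-1}\theta^{1-i}=\mathrm{id}$ on the first factor and the accumulation $\theta^{i-1}\theta^{1-k}=\theta^{i-k}$ on the second. The conceptual point to flag is that splitting $\theta^{i-1}$ across the product silently relies on $\theta$ respecting multiplication; if $\theta$ were only an automorphism of $(R,+)$, this step would fail and the stated simplification would not hold.
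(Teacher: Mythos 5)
Your proof is correct and follows essentially the same route as the paper's own proof: unwinding the definition $A*B=\varphi(\varphi^{-1}(A)\varphi^{-1}(B))$, computing the $(i,j)$-entry of the ordinary product $\varphi^{-1}(A)\varphi^{-1}(B)$, and then pushing $\theta^{i-1}$ through the sum and each product to get the cancellation $\theta^{i-1}\theta^{1-i}=\mathrm{id}$ on the first factor and $\theta^{i-1}\theta^{1-k}=\theta^{i-k}$ on the second. Your closing remark, that this last step silently uses multiplicativity of $\theta$ and would fail for a merely additive automorphism, is a point the paper leaves implicit.
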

\begin{proof}
For $1\le i, j\le n$, we compute:
\begin{align*}
	(A*B)_{ij}=\varphi(\varphi^{-1}(A)\varphi^{-1}(B))_{ij}&=\theta^{i-1}((\varphi^{-1}(A)\varphi^{-1}(B))_{ij})\\
	&=\theta^{i-1}(\sum_k\varphi^{-1}(A)_{ik}\varphi^{-1}(B)_{kj})\\
	&=\sum_k\theta^{i-1}(\theta^{1-i}(A_{ik})\theta^{1-k}(B_{kj}))\\
	&=\sum_kA_{ik}\theta^{i-k}(B_{kj}) .
	\end{align*}
	  This concludes the proof.	
\end{proof}
\begin{example}We define the additive map $\vf$ by: 
	\begin{align*}
\vf :M_2(R)&\longrightarrow M_2(R) \\
\left(
\begin{array}{cc}
a & b \\
c & d \\
\end{array}
\right)&\longmapsto \left(
\begin{array}{cc}
a & c \\
b & d \\
\end{array}
\right);	
\end{align*}
\begin{align*}
\left(
\begin{array}{cc}
a & b \\
c & d \\
\end{array}
\right)*\left(
\begin{array}{cc}
a' & b' \\
c' & d' \\
\end{array}
\right)=&\vf(\vf^{-1}(\left(
\begin{array}{cc}
a & b \\
c & d \\
\end{array}
\right))\vf^{-1}(\left(
\begin{array}{cc}
a' & b' \\
c' & d' \\
\end{array}
\right)))\\
=&\vf(\left(
\begin{array}{cc}
a & c \\
b & d \\
\end{array}
\right)\left(
\begin{array}{cc}
a' & c' \\
b' & d' \\
\end{array}
\right))\\
=& \vf(\left(
\begin{array}{cc}
aa'+cb' & ac'+cd' \\
ba'+db' & bc'+dd'\\
\end{array}
\right))\\
&=
\left(
\begin{array}{cc}
aa'+cb' & ba'+db'\\
ac'+cd'  & bc'+dd'\\
\end{array}
\right).	
\end{align*}
\end{example}
\begin{example}
 We define the additive map $\vf$ by: 
\begin{align*}
\vf :M_2(R)&\longrightarrow M_2(R) \\
\left(
\begin{array}{cc}
a & b \\
c & d \\
\end{array}
\right)&\longmapsto \left(
\begin{array}{cc}
b & c \\
d & a \\
\end{array}
\right);	
\end{align*}
\begin{align*}
\left(
\begin{array}{cc}
a & b \\
c & d \\
\end{array}
\right)*\left(
\begin{array}{cc}
a' & b' \\
c' & d' \\
\end{array}
\right)=&\vf(\vf^{-1}(\left(
\begin{array}{cc}
a & b \\
c & d \\
\end{array}
\right))\vf^{-1}(\left(
\begin{array}{cc}
a' & b' \\
c' & d' \\
\end{array}
\right)))\\
=&\vf(\left(
\begin{array}{cc}
d & a \\
b & c \\
\end{array}
\right)\left(
\begin{array}{cc}
d' & a' \\
b' & c' \\
\end{array}
\right)).\\
=& \vf(\left(
\begin{array}{cc}
dd'+ab' & da'+ac' \\
bd'+cb' & ba'+cc'\\
\end{array}
\right))\\
=&
\left(
\begin{array}{cc}
da'+ac' & bd'+cb'\\
ba'+cc'  & dd'+cb'\\
\end{array}
\right).	
\end{align*}
\end{example}
The map $\vf: M_n(R),.\longrightarrow M_n(R),*$ gives an isomorphism of rings ( see Lemma \ref{Lemme general construction of new product}).   
This new product structure restricted to the subring of upper triangular matrices over a finite field $F$ with the Frobenius map was already used in Habibi et al \cite{LCD1}.   The next proposition is a particular case of Lemma \ref{Lemme general construction of new product}.


\begin{proposition}
	Let $\varphi:M_n(R),.\longrightarrow M_n(R),*$ be the map
	define by $\varphi(a_{ij})=\theta^{i-1}(a_{ij})$.  Then $\varphi$ is an isomorphism of rings.
\end{proposition}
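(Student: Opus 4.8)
The plan is to recognize this proposition as a direct specialization of Lemma \ref{Lemme general construction of new product}, so the main work is merely to verify that the hypotheses of that lemma hold for the map $\varphi$ defined here. First I would observe that $\varphi: M_n(R) \longrightarrow M_n(R)$, given entrywise by $\varphi(A)_{ij} = \theta^{i-1}(A_{ij})$, is an isomorphism of the additive group $(M_n(R),+)$. Additivity follows immediately from the additivity of each $\theta^{i-1}$ (since $\theta \in Aut(R)$ is in particular an additive automorphism), applied entrywise: $\varphi(A+B)_{ij} = \theta^{i-1}(A_{ij}+B_{ij}) = \theta^{i-1}(A_{ij}) + \theta^{i-1}(B_{ij})$. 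Bijectivity follows because $\theta$ is invertible, so the entrywise map $\psi(A)_{ij} = \theta^{1-i}(A_{ij})$ is a two-sided inverse for $\varphi$.

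Next I would invoke Lemma \ref{Lemme general construction of new product} with $R$ taken to be the matrix ring $(M_n(R), +, \cdot)$ under ordinary matrix multiplication, with $S = (M_n(R), +)$ the same underlying additive group, and with the additive isomorphism being precisely this $\varphi$. The lemma transports the ordinary matrix product through $\varphi$ to define the starred product by $A * B = \varphi(\varphi^{-1}(A)\,\varphi^{-1}(B))$, and guarantees both that $(M_n(R), +, *)$ is a ring and that $\varphi$ is a ring isomorphism from $(M_n(R), \cdot)$ to $(M_n(R), *)$. This is exactly the assertion of the proposition, so no further computation is required beyond checking that $\varphi$ satisfies the single hypothesis (additive isomorphism) that the lemma demands.

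The only genuine point requiring care, and hence the step I would flag as the main obstacle, is confirming that $\theta$ being a \emph{ring} automorphism is not actually needed here: the lemma asks only for an isomorphism of \emph{additive} groups. So strictly speaking, the hypothesis $\theta \in Aut(R)$ is stronger than necessary for $\varphi$ to be an additive isomorphism, and I would want to state clearly that the conclusion follows from additivity and bijectivity of $\theta$ alone. One should also be mindful that the explicit product formula $(A*B)_{ij} = \sum_{k=1}^n a_{ik}\,\theta^{i-k}(b_{kj})$ from Theorem \ref{New Product on Matrices} describes $*$ concretely, but that formula is not needed for the present proposition; it suffices to cite the lemma. I would therefore keep the proof to a single sentence: since $\varphi(A)_{ij} = \theta^{i-1}(A_{ij})$ is an additive isomorphism of $(M_n(R),+)$ (being entrywise application of the additive automorphism $\theta^{i-1}$, with entrywise inverse $\theta^{1-i}$), the claim is immediate from Lemma \ref{Lemme general construction of new product} applied to the matrix ring $M_n(R)$.
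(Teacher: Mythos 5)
Your proposal is correct and matches the paper's own treatment: the paper gives no separate proof, stating explicitly that this proposition ``is a particular case of Lemma \ref{Lemme general construction of new product},'' which is precisely your argument of checking that $\varphi$ is an additive isomorphism (with entrywise inverse $\theta^{1-i}$) and then transporting the matrix product through it. Your side remark that only additivity and bijectivity of $\theta$ are needed is also consistent with the paper, which notes just before Theorem \ref{New Product on Matrices} that $\theta$ need only be an automorphism of $(R,+)$, not necessarily a ring automorphism.
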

\begin{corollary}
	The inverse of a matrix $A\in M_n(R),*$ is the matrix $\varphi(B)\in M_n(R),*$ where $B$ is the usual inverse of $\varphi^{-1}(A)$. 
\end{corollary}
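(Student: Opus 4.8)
The plan is to exploit the fact, already established in the preceding Proposition (and in Lemma \ref{Lemme general construction of new product}), that $\varphi:(M_n(R),\cdot)\longrightarrow (M_n(R),*)$ is an isomorphism of rings. Once this is in hand, the corollary is nothing more than the general principle that a ring isomorphism carries invertible elements to invertible elements and commutes with the formation of inverses; so the work is conceptual rather than computational.

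First I would pin down the unity of $(M_n(R),*)$. By Lemma \ref{Lemme general construction of new product} the unity of the new ring is the $\varphi$-image of the old unity, and since $(M_n(R),\cdot)$ has identity $I_n$, the $*$-identity is $\varphi(I_n)$. This is the only point that requires a moment of care, since it is against $\varphi(I_n)$, not $I_n$, that the candidate inverse must be tested.

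Next, writing $A'=\varphi^{-1}(A)$ so that $A=\varphi(A')$, the hypothesis that $B$ is the usual matrix inverse of $\varphi^{-1}(A)$ means $B=(A')^{-1}$, hence $A'B=BA'=I_n$. Invoking the homomorphism identity $\varphi(XY)=\varphi(X)*\varphi(Y)$ proved in Lemma \ref{Lemme general construction of new product}, I would compute
$$
A*\varphi(B)=\varphi(A')*\varphi(B)=\varphi(A'B)=\varphi(I_n),
$$
and symmetrically $\varphi(B)*A=\varphi(B)*\varphi(A')=\varphi(BA')=\varphi(I_n)$. As $\varphi(I_n)$ is the $*$-identity, this exhibits $\varphi(B)$ as a two-sided $*$-inverse of $A$, which is the assertion.

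There is no genuine obstacle here: every step is forced once $\varphi$ is recognized as a ring isomorphism, and the existence of $B$ is built into the statement. The two-sidedness of the conclusion is automatic, since $A'$ admits a two-sided ordinary inverse and $\varphi$ transports this property faithfully to the $*$-structure. If one wished, the same argument could be phrased abstractly for the isomorphism of Lemma \ref{Lemme general construction of new product}, with the matrix case being the specialization at hand.
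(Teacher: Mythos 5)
Your proof is correct and takes essentially the same route as the paper's: both invoke the ring-isomorphism property of $\varphi$ to compute $A*\varphi(B)=\varphi(\varphi^{-1}(A)B)=\varphi(I_n)$. You are in fact slightly more careful than the paper, which simply writes $\varphi(I_n)=I_n$ (true in this context because $\theta$ is a ring automorphism fixing $0$ and $1$), whereas you correctly identify the $*$-unity as $\varphi(I_n)$ in general and also verify that the inverse is two-sided, a point the paper leaves implicit.
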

\begin{proof}
	Suppose that $\varphi^{-1}(A)B=I_n$, then $A*\varphi(B)=\varphi(\varphi^{-1}(A)B)=\varphi(I_n)=I_n$, as desired.
\end{proof}
In order to introduce a scalar product related to the new product of matrices, we are forced to generalize the above construction as follows.

Let $R$ be a ring, and 
for any $n,k\in \nn$, suppose we have an additive isomorphism 
$$
\varphi_{n,k}: M_{n,k}(R),+ \longrightarrow M_{n,k}(R),+
.$$
Now, if $A\in M_{n,k}(R)$ and $B\in M_{k,l}(R)$ we define
\begin{itemize}
	\item[(a)] $A\circ B=\varphi_{n,l}^{-1}(\varphi_{n,k}(A)\varphi_{k,l}(B))$.
	\item[(b)] $A*B=\varphi_{n,l}(\varphi_{n,k}^{-1}(A)
		\varphi_{k,l}^{-1}(B))$.
\end{itemize}
There are many ways of defining the additive maps $\varphi_{n,k}$ in general.  For instance, we could use any permutation of the $nk$ entries of the matrices in $M_{n,k}(R)$.
In this paper, we will consider an automorphism $\theta$ of the ring $R$ and construct an  additive  map $\varphi_{n,k}$ as follows:
$$
\varphi_{n,k}(A)_{ij}=(\theta^{i-1}(a_{i,j})) \quad 
{\rm where} \;\; A=(a_{i,j})\in M_{n,k}(R).
$$
With these notations we have:

\begin{lemma}
	\label{star and circ}
	Let $R$ be a ring, $A\in M_{n,k}(R), B\in M_{k,l}(R)$ and $\theta\in Aut(R)$.  With the above notations we have:
	\begin{enumerate}
	\item $(A\circ B)_{i,j}=\sum_r A_{ir}\theta^{r-i}(B_{r,j})$
	
	\item $(A * B)_{i,j}
	=\sum_r A_{ir}\theta^{i-r}(B_{r,j}).$
	
	\item If $n=k$ the map $\varphi :M_n(R),+,.\longrightarrow M_n(R),+,*$ defined by $\varphi((A_{ij}))=((\theta^{i-1})(A_{ij}))$ is a ring homomorphism.
\end{enumerate}
	\noindent  In (1) and (2) above, these products are additive on both variables but are linear only on the left.
	
\end{lemma}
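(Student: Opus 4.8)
The plan is to treat the three assertions separately: I would derive (1) and (2) by a single direct entrywise computation and deduce (3) immediately from the general Lemma \ref{Lemme general construction of new product}.

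For (1) and (2) I would unwind the definitions (a) and (b) using only the fact that $\varphi_{n,k}$ acts entrywise by $\varphi_{n,k}(X)_{ij}=\theta^{i-1}(X_{ij})$, so that its inverse acts by $\varphi_{n,k}^{-1}(X)_{ij}=\theta^{1-i}(X_{ij})$. Take (2) first. Writing $A*B=\varphi_{n,l}(\varphi_{n,k}^{-1}(A)\varphi_{k,l}^{-1}(B))$ and reading off the $(i,j)$ entry, I would apply $\theta^{i-1}$ to the $(i,j)$ entry of the ordinary matrix product, push $\theta^{i-1}$ through the finite sum over the inner index $r$ (it is additive), and then through each product $\theta^{1-i}(A_{ir})\theta^{1-r}(B_{rj})$. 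This is exactly the point where I must use that $\theta$ is a genuine ring automorphism and not merely an additive one: only then does $\theta^{i-1}$ distribute over the product, yielding $\theta^{i-1}\theta^{1-i}(A_{ir})\cdot\theta^{i-1}\theta^{1-r}(B_{rj})=A_{ir}\theta^{i-r}(B_{rj})$ after collecting exponents. This mirrors the computation in Theorem \ref{New Product on Matrices}, now carried out for rectangular blocks; the only care needed is that the row index feeding $\varphi$ is $i$ for the factor $A\in M_{n,k}(R)$ but $r$ for the factor $B\in M_{k,l}(R)$. The formula in (1) is obtained identically, starting from $A\circ B=\varphi_{n,l}^{-1}(\varphi_{n,k}(A)\varphi_{k,l}(B))$, so that the outer map contributes $\theta^{1-i}$ while the inner entries carry $\theta^{i-1}(A_{ir})$ and $\theta^{r-1}(B_{rj})$; collecting exponents gives $A_{ir}\theta^{r-i}(B_{rj})$.

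For (3) no fresh work is required: when $n=k$ the map $\varphi=\varphi_{n,n}$ is an additive isomorphism of $M_n(R)$, and the product $*$ of (b) is by construction precisely the product produced from $\varphi$ in Lemma \ref{Lemme general construction of new product}. That lemma already asserts that $\varphi$ is then a ring isomorphism; in particular $\varphi(AB)=\varphi(A)*\varphi(B)$ and $\varphi(I_n)=I_n$, so $\varphi$ is a ring homomorphism.

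Finally, the additivity and one-sided linearity claims I would read directly off the closed formulas (1) and (2). Additivity in each argument is immediate, since every entry $A_{ir}$ and $B_{rj}$ occurs linearly and each $\theta^{r-i}$ (respectively $\theta^{i-r}$) is additive. Left-linearity holds because replacing $A$ by $\lambda A$ pulls the scalar $\lambda$ out of the sum on the left; right-linearity fails because replacing $B$ by $\lambda B$ produces the twisted factor $A_{ir}\theta^{r-i}(\lambda)\theta^{r-i}(B_{rj})$, which cannot in general be rewritten as $\lambda(A\circ B)_{ij}$. I expect the only genuinely delicate step to be the exponent bookkeeping in (1) and (2) and the attendant reliance on $\theta$ being multiplicative; everything else is formal.
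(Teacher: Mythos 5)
Your proposal is correct and follows essentially the same route as the paper: the closed formulas in (1) and (2) are obtained by the same direct entrywise unwinding of $\varphi_{n,l}^{\pm 1}$ applied to the ordinary matrix product, using that $\theta$ is a ring automorphism to push $\theta^{\pm(i-1)}$ through sums and products. Your treatment of (3) via Lemma \ref{Lemme general construction of new product} and your verification of the one-sided linearity remark simply supply details the paper leaves to the reader (its proof handles (1), declares (2) similar, and leaves (3) unproved), so there is no substantive divergence.
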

\begin{proof}
	We have 
	\begin{align*}
		(A\circ B)_{ij}&=\varphi_{n,l}^{-1}((\varphi_{n,k}(A)\varphi_{k,l}(B)))_{ij}\\
		&=\theta^{1-i}(\sum_s\varphi_{n,k}(A)_{is}\varphi_{kl}(B)_{sj})\\
		&=\theta^{1-i}(\sum_s\theta^{i-1}(A_{is})\theta^{s-1}(B_{sj}))\\
		&=\sum_sA_{is}\theta^{s-i}(B_{sj}).
	\end{align*}
	A similar computation gives the second formula.
	
	(3) is left to the reader.
\end{proof}

Let us remark that in the above definitions of the maps $\varphi_{n,k}$ the indices $n,k$ just fix the size of the matrices we are working with.  In the sequel we will just write $\varphi$ and drop the indices.

We denote by $R^n$ the space of rows $M_{1,n}(R)$ and by $^nR$ the space of columns $M_{n,1}(R)$.   For $x\in R^n$, we denote $x^t\in\,^nR$ the transpose of $x$.
If we consider $\varphi_{1,n}$ and $\varphi_{n,1}$, $\circ$ and $*$ give two scalar products i.e. biadditive maps from $R^n\times\,^nR$ into $R$.   More explicitly we have, for $a,b\in R^n$ we have:
$$
a\circ b^t=\varphi_{1,1}^{-1}(\varphi_{1,n}(a)\varphi_{n,1}(b^t)) \quad {\rm and} \quad a*b^t=\varphi_{1,1}(\varphi_{1,n}^{-1}(a)
\varphi_{n,1}^{-1}(b^t)).
$$
In general, these scalar products are neither $R$-linear nor symmetric in the sense that, in general, for $a,b\in R^n$
$a\circ b^t\ne b \circ a^t$ and $a * b^t\ne b * a^t$.  Note that the orthogonality with respect to these scalar products is not the same as the classical orthogonality. 
Similarly, the orthogonalities for both the operations $\circ$ and $*$ are different.  Let $C\subseteq R^n$,
we define 
$$
^{\perp *}C=\{\ x\in R^n\mid x*c^t=0,\; \forall c\in C \} \quad {\rm and} \quad C^{*\perp} =\{ x\in R^n \mid c*x^t=0,\; \forall c\in C\}\\
$$
$$
^{\perp \circ}C=\{\ x\in R^n\mid x\circ c^t=0,\; \forall c\in C \} \quad {\rm and} \quad C^{\circ \perp} =\{ x\in R^n \mid c\circ x^t=0,\; \forall c\in C\}
$$
\begin{remarks}
We usually take $\varphi_{1,1}=Id$. In these cases 
\begin{enumerate}
	\item if $\varphi_{1,n}$ is a left linear  map from $R^n_R \longrightarrow R^n_R$. Then scalar products given by $*$ and $\circ$ are left linear  but just additive on the right.
	\item  if $\varphi_{1,n}$ is a right linear  map from $R^n_R \longrightarrow R^n_R$. Then scalar products given by $*$ and $\circ$ are right linear  but only additive on the left. 
	\item If $\varphi_{1,n}$ and $\varphi_{n,1}$ are the identity maps (on $R^n$ and $^nR$ respectively ) then $*$ is the usual scalar product.
	\item If $\varphi_{1,n}$ is the identity and for any $(b_1,\ldots,b_n)\in R^n,$ $\varphi_{1,n}((b_1,\ldots,b_n)^t)=(b_1^{*},\ldots,b_n^{*})^t$ where  $*$ is an involution on $R$, then $*$ is an Hermitian product. 
\end{enumerate}	
\end{remarks}
\begin{proposition} \label{pro:L-LCD} Let $C\subseteq R^n$, we have:
	\begin{enumerate}
		\item $C^{*,\perp}=(\varphi_{n,1}(\varphi_{1,n}^{-1}(C)^\perp))^t$.
		\item $\varphi_{1,n}^{-1}(^{\perp *}C)=\varphi_{n,1}^{-1}(C^t)^\perp)$.
		\item $^{\perp \circ}C=\varphi_{1,n}^{-1}(\varphi_{n,1}(C^t)^\perp)$.
		\item $C^{\circ \perp}=\varphi_{n1}^{-1}(\varphi_{1,n}(C)^\perp).$
	\end{enumerate}

\end{proposition}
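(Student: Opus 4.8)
The plan is to prove all four identities by a single routine computation, since they differ only in which scalar product ($*$ or $\circ$) is used and on which side the free variable sits. Throughout I read the classical orthogonal $\perp$ as the row/column duality induced by the ordinary matrix product: for a set of rows $D\subseteq R^n$, $D^\perp=\{v\in{}^nR\mid dv=0,\ \forall d\in D\}$ is a set of columns, and for a set of columns $W\subseteq{}^nR$, $W^\perp=\{u\in R^n\mid uw=0,\ \forall w\in W\}$ is a set of rows. With this convention every formula in the statement parses (up to the transposes that move a column-set back to a row-set), and the proof of each item is a short chain of equivalences obtained by unfolding the definition of the scalar product and then pushing the additive bijections $\varphi_{1,n},\varphi_{n,1}$ through the set-builder notation.

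Let me write out item (1) as the template. Fix $x\in R^n$. By definition $x\in C^{*\perp}$ means $c*x^t=0$ for every $c\in C$. Substituting the definition $c*x^t=\varphi_{1,1}\bigl(\varphi_{1,n}^{-1}(c)\,\varphi_{n,1}^{-1}(x^t)\bigr)$ and recalling that $\varphi_{1,1}$ is an additive bijection (so $\varphi_{1,1}(z)=0\iff z=0$; in particular the standing normalization $\varphi_{1,1}=\mathrm{Id}$ is convenient but not essential here), the condition reduces to the ordinary matrix identity $\varphi_{1,n}^{-1}(c)\,\varphi_{n,1}^{-1}(x^t)=0$, a product of the row $\varphi_{1,n}^{-1}(c)$ by the column $\varphi_{n,1}^{-1}(x^t)$. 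Hence $x\in C^{*\perp}$ if and only if $\varphi_{n,1}^{-1}(x^t)$ is classically orthogonal to every row of $\varphi_{1,n}^{-1}(C)$, i.e. $\varphi_{n,1}^{-1}(x^t)\in\bigl(\varphi_{1,n}^{-1}(C)\bigr)^\perp$. Since $\varphi_{n,1}$ is bijective this is equivalent to $x^t\in\varphi_{n,1}\bigl(\varphi_{1,n}^{-1}(C)^\perp\bigr)$, and transposing yields exactly the asserted formula.

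The remaining three parts run verbatim after two bookkeeping changes: replacing $*$ by $\circ$ swaps every $\varphi^{-1}$ for the corresponding $\varphi$ in the unfolding, and interchanging the roles of $x$ and $c$ decides which vector is the free row carried outside the $\perp$ and where the outer transpose lands. Concretely, in (2) and (3) the variable $x$ sits on the left of the product, so $\varphi_{1,n}^{\mp1}(x)$ is the free row and one is orthogonalizing against the column-set $\varphi_{n,1}^{\mp1}(C^t)$, giving $\varphi_{1,n}^{\mp1}({}^{\perp *}C)=\varphi_{n,1}^{-1}(C^t)^\perp$ and $\varphi_{1,n}({}^{\perp\circ}C)=\varphi_{n,1}(C^t)^\perp$; in (4) the variable $x$ sits on the right exactly as in (1), producing the analogue with $\varphi$ in place of $\varphi^{-1}$. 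I expect no genuine obstacle in any step, as each identity is a one-line equivalence once the conventions are fixed; the only thing demanding care is the consistent tracking of which map ($\varphi_{1,n}$ versus $\varphi_{n,1}$) and which exponent ($+1$ for $\circ$, $-1$ for $*$) attaches to each vector, together with the transposes forced by the fact that $\perp$ interchanges $R^n$ and ${}^nR$.
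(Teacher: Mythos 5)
Your proposal is correct and follows essentially the same route as the paper: unfold the definition of $*$ (resp. $\circ$), use that $\varphi_{1,1}$ kills only zero to reduce to the classical row--column orthogonality, and then transport the resulting set through the bijections $\varphi_{1,n}$, $\varphi_{n,1}$, with the paper writing out (1) and (3) and leaving (2) and (4) to the reader just as you treat the remaining cases by symmetry. Your explicit care with the transposes and with $\varphi_{1,1}$ being merely an additive bijection (rather than the identity) is a minor tightening of the paper's argument, not a different method.
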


\begin{proof}
(1)	Let the maps
	$$\varphi_{1,n} :M_{1,n} (R) \rightarrow M_{1,n} (R)\;;\;\varphi_{n,1} :M_{n,1} (R) \rightarrow M_{n,1} (R)\;{\rm and}\; \varphi_{1,1}=Id.$$
	\begin{align*}
		C^{*\perp}&=\{x\in R^n|\varphi_{1,1}(\varphi_{1,n}^{-1}(c)\varphi_{n,1}^{-1}(x^t))=0, \forall c\in C\}\\
		&=\{x\in R^n|\varphi_{1,1}(y\varphi_{n,1}^{-1}(x^t))=0, \forall y\in \varphi_{1,n}^{-1}(C)\}\\
		&=\{x\in R^n|\varphi_{1,n}^{-1}(C)\varphi_{n,1}^{-1}(x^t)=0\}\\
		&=(\varphi_{n,1}(\varphi_{1,n}^{-1}(C)^\perp))^t.
	\end{align*}
	Hence the result.

(2) This is left to the reader.

(3)We compute: \begin{align*}^{\perp \circ }C&=\{\ x\in R^n\mid x\circ c^t=0,\; \forall c\in C \}\\&=\{x\in R^n|\varphi_{1,1}^{-1}(\varphi_{1,n}(x)\varphi_{n,1}(c^t))=0 \}\\&=\{ \varphi_{1,n} ^{-1}(x)|x\varphi_{n,1}(c^t)=0 \}\\&=\varphi_{n,1}^{-1}(\varphi_{n,1} (C^t)^{\perp}).\end{align*}

(4) This is left to the reader.
\end{proof}

Similar results hold for the other scalar products.  Thanks to this proposition we can characterize  self-duality with respect to * via usual orthogonality.

\begin{corollary}
	A code $C$ is a self-dual code for $*$ (i.e. $C=C^{*\perp}$) if and only $\varphi_{n,1}^{-1}(C^t)=
	\varphi_{1,n}^{-1}(C)^{\perp}.$  Similarly if 
	$C=^{\perp *}C$, then we have $\varphi_{1,n}(C)=\varphi_{n,1}(C^t)^\perp$.
\end{corollary}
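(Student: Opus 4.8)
The plan is to read both assertions off Proposition~\ref{pro:L-LCD} directly, since the defining equalities $C=C^{*\perp}$ and $C={}^{\perp *}C$ are precisely the quantities already computed there; the only remaining work is to move the fixed set $C$ across a transpose and an application of one of the additive isomorphisms $\varphi_{1,n},\varphi_{n,1}$. No new idea beyond Proposition~\ref{pro:L-LCD} is needed, so I would not redo any orthogonality computation from scratch.

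For the first equivalence I would begin with part~(1), $C^{*\perp}=(\varphi_{n,1}(\varphi_{1,n}^{-1}(C)^\perp))^t$. Imposing self-duality $C=C^{*\perp}$ turns this into $C=(\varphi_{n,1}(\varphi_{1,n}^{-1}(C)^\perp))^t$. I would then transpose both sides to clear the outer $(\cdot)^t$, obtaining $C^t=\varphi_{n,1}(\varphi_{1,n}^{-1}(C)^\perp)$, and finally apply $\varphi_{n,1}^{-1}$ to both sides, which yields exactly $\varphi_{n,1}^{-1}(C^t)=\varphi_{1,n}^{-1}(C)^\perp$. The point that upgrades this from a one-way implication to an \emph{if and only if} is that the transposition $R^n\to{}^nR$ and the additive isomorphism $\varphi_{n,1}$ are both bijections, so every step in the chain is reversible; running the chain backwards recovers $C=C^{*\perp}$ from the displayed identity.

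For the second assertion I would proceed in the same spirit, now invoking part~(2), $\varphi_{1,n}^{-1}({}^{\perp *}C)=(\varphi_{n,1}^{-1}(C^t))^\perp$. Substituting the hypothesis $C={}^{\perp *}C$ into the left-hand side collapses it to $\varphi_{1,n}^{-1}(C)$, so that $\varphi_{1,n}^{-1}(C)=(\varphi_{n,1}^{-1}(C^t))^\perp$, which is the asserted relation. I expect the main obstacle to be purely notational bookkeeping rather than any genuine difficulty: one has to keep straight which index pattern, $\varphi_{1,n}$ or $\varphi_{n,1}$, acts on rows in $R^n$ and which on columns in ${}^nR$, and insert the transpose at exactly the right place so that each isomorphism is applied to an object living in the space it was defined on. Keeping these placements consistent with the convention $\varphi_{1,1}=\mathrm{Id}$ used throughout Proposition~\ref{pro:L-LCD} is the only thing that requires care.
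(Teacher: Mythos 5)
Your approach---reading both claims directly off Proposition~\ref{pro:L-LCD}---is exactly what the paper intends: it offers no separate proof and presents the corollary as an immediate consequence of that proposition. Your treatment of the first equivalence is correct and complete: starting from part~(1), imposing $C=C^{*\perp}$, transposing, and applying the bijection $\varphi_{n,1}^{-1}$ gives $\varphi_{n,1}^{-1}(C^t)=\varphi_{1,n}^{-1}(C)^{\perp}$, and since transposition and $\varphi_{n,1}$ are bijections every step reverses, so the equivalence holds in both directions.

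In the second half, however, there is a mismatch you did not notice. Substituting $C={}^{\perp *}C$ into part~(2) yields, as you say, $\varphi_{1,n}^{-1}(C)=\bigl(\varphi_{n,1}^{-1}(C^t)\bigr)^{\perp}$, i.e.\ an identity involving the \emph{inverse} maps. But the corollary asserts $\varphi_{1,n}(C)=\varphi_{n,1}(C^t)^{\perp}$, with the maps themselves. These are different statements in general: for the concrete $\varphi_{n,1}$ built from $\theta$, the inverse acts by $\theta^{1-i}$ on the $i$-th coordinate while the map itself acts by $\theta^{i-1}$, and these agree only when $\theta^2=\mathrm{id}$. So your claim that the identity you derived ``is the asserted relation'' is false as written. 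What you actually proved is the statement that genuinely follows from part~(2); the uninverted identity $\varphi_{1,n}(C)=\varphi_{n,1}(C^t)^{\perp}$ is instead what follows from part~(3) for the $\circ$-orthogonal, i.e.\ from $C={}^{\perp\circ}C$. In other words, either the corollary's second sentence contains a typo (dropped inverses, or $*$ where $\circ$ was meant), or a proof via part~(2) alone cannot reach it. In a blind proof the correct move is to flag this discrepancy explicitly rather than to assert that the two expressions coincide.
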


Similar results hold for the other scalar products.

\begin{examples}
	{\rm
		The map $\varphi_{1,3}: M_{1,3}(R) \longrightarrow M_{1,3}(R),$
		$\varphi_{1,3}(a_1,a_2,a_3)=(\lambda a_3,a_1,a_2)$, where $\lambda\in R$ and  $\varphi_{3,1}(b_1,b_2,b_3)^t=(b_2,b_1,b_3)^t$.
		The scalar product for $*$ given by
		$(a_1,a_2,a_3)*(b_1,b_2,b_3)^t= (\lambda a_3,a_1,a_2). (b_2,b_1,b_3)^t= \lambda a_3b_2+a_1b_2+a_2b_3$ \\
		Let $R=\F_4=\{0,1,\alpha, \alpha^{2}\}$ with $\alpha^2+\alpha +1=0$, $\lambda =\alpha$  and $C=\langle (1,0,1),(\alpha,1,0)\rangle.$ Then $(C)^{*,\perp}=\{(x,\alpha^{2}x,\alpha x)|x\in\F_4\}.$
	}

%
\end{examples}

%
%
%
%
%

Let $\theta$ be an automorphism of $R$.  For $n\ge 1$, $a=(a_1,\dots,a_n)\in R^n$, and $b=(b_1,\dots,b_n)\in R^n$, we define  $\varphi_{1,n}(a)=a$ and $\varphi_{n,1}(b^t)=
(b_1,\theta(b_2),\dots, \theta^{n-1}(b_n))^t$.
Explicitly we have $a*b^t=\sum_r a_r \theta^{r-1}(b_r)$ and  $a\circ b^t=\sum_r a_r \theta^{1-r}(b_r)$.

\begin{proposition}
	\label{properties of the scalar product}
	Let $C$ be a subset of $R^n$.  We have:
	\begin{enumerate}
		\item $^{\perp *}C$ is always an $R$-submodule of $R^n$ and $C^{* \perp}$ is always additive subgroup of $R^n$.
		\item Let $x,y \in R^n$, we have $x*y^t
		=x\cdot\varphi(y)$, where $\cdot$ stands for the usual dot product.
		\item We have $C\subseteq ^{\perp *}(C^{* \perp})$
		and $C\subseteq (^{\perp *}C^t)^{* \perp}$.
		\item[($3^{\prime}$)] We have $C\subseteq ^{\perp \circ}(C^{\circ \perp})$
		and $C\subseteq (^{\perp \circ}C^t)^{\circ \perp}$.
		\item We have $^{\perp *}C=\varphi(C^t)^{\perp}$ and $C^\perp =\varphi(C^{* \perp})$.
		\item[($4^{\prime}$)] We have $^{\perp \circ}C=\varphi(C^t)^{\perp}$ and $C^\perp =\varphi(C^{\circ \perp})$.
	\end{enumerate}  
\end{proposition}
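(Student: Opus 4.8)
The plan is to treat item (2) as the engine of the whole proposition and deduce everything else from it. Item (2) itself is immediate: expanding the explicit formula $x * y^t = \sum_r x_r \theta^{r-1}(y_r)$ and comparing it with the ordinary dot product $x \cdot z = \sum_r x_r z_r$, one reads off $x * y^t = x \cdot \varphi(y)$, since by construction the $r$-th coordinate of $\varphi(y)$ is exactly $\theta^{r-1}(y_r)$. In other words, the twisted product is nothing but the ordinary dot product precomposed, on the right argument only, with the additive bijection $\varphi$. Every remaining item is then a translation of a statement about $*$ into a statement about the ordinary dot product through this identity.

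For item (1) I would argue from the one-sided linearity that the formula displays. The map $x \mapsto x * c^t = \sum_r x_r \theta^{r-1}(c_r)$ is additive and left $R$-linear in $x$ (a left scalar $\lambda$ multiplies each $x_r$ and factors straight out), so $^{\perp *}C$, being an intersection of kernels of such maps, is a left $R$-submodule. By contrast, $x \mapsto c * x^t = \sum_r c_r \theta^{r-1}(x_r)$ is only additive: a left scalar $\lambda$ becomes $\theta^{r-1}(\lambda)$ after passing through $\theta^{r-1}$ and is trapped between $c_r$ and $\theta^{r-1}(x_r)$, so it cannot be extracted. Hence $C^{*\perp}$ is merely an additive subgroup. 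This is precisely the asymmetry flagged after Lemma \ref{star and circ}.

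Items (3) and ($3'$) I would prove directly from the definitions, which is cleaner than invoking a double-orthogonality theorem and, crucially, sidesteps any appeal to commutativity of $R$. If $c \in C$ and $d \in C^{*\perp}$, then by the very definition of $C^{*\perp}$ we have $c * d^t = 0$, and this is exactly the condition for $c$ to lie in $^{\perp *}(C^{*\perp})$; thus $C \subseteq {}^{\perp *}(C^{*\perp})$. The companion containment $C \subseteq ({}^{\perp *}C)^{*\perp}$ follows by the mirror-image tautology, and ($3'$) by the same reasoning for $\circ$. The point to stress is that the mixed left/right nature of the two orthogonals is what makes these containments automatic over a noncommutative ring, where the single-sided statement $C \subseteq (C^\perp)^\perp$ would fail.

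Finally, for items (4) and ($4'$) I would substitute the identity of (2). One gets $^{\perp *}C = \{x : x \cdot \varphi(c) = 0 \ \forall c \in C\}$, which is the ordinary orthogonal complement $\varphi(C^t)^\perp$; and from $c * x^t = c \cdot \varphi(x)$ one gets $x \in C^{*\perp} \iff \varphi(x) \in C^\perp$, i.e. $C^{*\perp} = \varphi^{-1}(C^\perp)$, which rearranges to $C^\perp = \varphi(C^{*\perp})$ because $\varphi$ is a bijection of $R^n$. The $\circ$-statement ($4'$) is obtained by rerunning these computations with the product formula for $\circ$, i.e. with $\varphi$ replaced by the corresponding inverse twist. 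The main obstacle is not any single deep step but the bookkeeping concentrated in (1), (3) and (4): one must keep the direction of the twist ($\theta^{r-1}$ versus $\theta^{1-r}$) consistent with the side on which $\varphi$ acts, respect the one-sided linearity so as to claim "submodule" on one side and only "additive subgroup" on the other, and track the transposes relating $R^n$ to $^nR$ so that the correct one-sided orthogonal complement is used at each stage.
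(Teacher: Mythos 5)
Your proposal is correct and takes essentially the same route as the paper: the explicit coordinate computation $x*y^t=\sum_r x_r\theta^{r-1}(y_r)=x\cdot\varphi(y)$ for item (2), the tautological unwinding of the two mixed left/right orthogonality definitions for items (3) and ($3'$), and translation through $\varphi$ into ordinary orthogonality for items (4) and ($4'$). You merely fill in what the paper compresses into ``direct consequences of the definitions'' and ``obtained similarly,'' and your remark that the $\circ$-versions need the inverse twist $\theta^{1-r}$ in place of $\varphi$ is a sound precision rather than a deviation.
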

\begin{proof}
	(1). These properties are direct consequences of the definitions.
	
	(2). Let $x,y\in R^n$, we have 
	$x*y^t=\sum_{i=1}^n 
	x_i\theta^{i-1}(y_i)=(x_1,\dots, 
	x_n)\varphi((y_1,\dots,y_n)^t)$.
	
	(3). Let  $(x_1,\dots,x_n) \in C^{* \perp}$
	then for any $c=(c_1,\dots,c_n) \in C$, we have $c* (x_1,\dots,x_n)^t=0$, i.e. $\sum c_i\theta^{i-1}(x_i)=0 $, i.e. $(c_1,\dots,c_n)\in ^{\perp,*}\{(x_1,\dots,x_n)\}$, this gives the first inclusion.  The second is obtained similarly.
	
	(3'),(4) and (4') are obtained similarly.
\end{proof}
 Let us remark that all the assertions of the above Proposition \ref{properties of the scalar product}, except (2), are valid in the general setting of $*$ and $\circ$. \\
\section{Left LCD codes and Applications}
In this section, we 
fix $\theta\in Aut(R)$.  We will use
 the $*$ multiplication for matrices as defined 
in Lemma \ref{star and circ}.  Analogues of the results that we will obtain are also true for the $\circ$ multiplication.   We  
give the definition of a $*$-LCD code and a necessary 
and sufficient condition for a code is a $*$-LCD code.
\begin{definition}
\label{defi:LCD}
	A left linear code $C$ is  $*$-LCD  if $C\cap C^{*\bot}=\{0\}$ $(C\cap ^{\perp*}C=\{0\})$.
\end{definition}

If $C$ is a left linear code, we can define it via a set of row bases $\{c_1,\dots,c_k\}\subseteq R^n$.   So if $H$ is the $k\times n$ matrix defined by these vectors, we have that $C=R^kH$.  Since the map $\varphi$ is a bijection, we can also define the code $C$ by $C=R^k*G$ for a matrix  $G\in M_{k,n}(R)$. 

\begin{lemma}
	Let $R$ be a commutative ring, $y\in R^k$ and $G\in M_{k,n}(R)$, we have 
	$(y*G)^t=\varphi(G)^t*\varphi^{-1}(y^t).$
\end{lemma}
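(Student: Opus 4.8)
The plan is to prove the identity entrywise, reducing both sides to the explicit component formula for $*$ recorded in Lemma \ref{star and circ} and then matching them using the commutativity of $R$. The first simplification I would make is to exploit that on row vectors the additive isomorphism $\varphi$ is the identity: since $\varphi(A)_{ij}=\theta^{i-1}(A_{ij})$ and a row lives in $M_{1,k}(R)$, the index $i$ is always $1$, so $\varphi_{1,k}^{-1}(y)=y$ and the outer $\varphi_{1,n}$ occurring in the definition of $y*G$ is trivial. Hence $y*G=y\,\varphi^{-1}(G)$, where the right-hand product is the ordinary matrix product and $\varphi^{-1}(G)_{rj}=\theta^{1-r}(G_{rj})$. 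Using $(A*B)_{ij}=\sum_r A_{ir}\theta^{i-r}(B_{rj})$ with $i=1$, the left-hand side becomes, entrywise, $(y*G)^t_{p,1}=(y*G)_{1,p}=\sum_r y_r\,\theta^{1-r}(G_{rp})$.

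Next I would expand the right-hand side $\varphi(G)^t*\varphi^{-1}(y^t)$ directly. Here $\varphi(G)^t\in M_{n,k}(R)$ with $(\varphi(G)^t)_{pr}=\varphi(G)_{rp}=\theta^{r-1}(G_{rp})$, while $\varphi^{-1}(y^t)\in M_{k,1}(R)$ with $(\varphi^{-1}(y^t))_{r,1}=\theta^{1-r}(y_r)$. Feeding these into the same product formula from Lemma \ref{star and circ} gives the $(p,1)$-entry as a sum over $r$ of $\theta^{r-1}(G_{rp})$ against a nested power of $\theta$ applied to $y_r$; I would then collapse those nested powers of $\theta$ (using that $\theta$ is an automorphism, so $\theta^a\theta^b=\theta^{a+b}$) and invoke the commutativity of $R$ to reorder the two scalar factors in each summand into the normal form $y_r\,\theta^{1-r}(G_{rp})$, thereby matching the left-hand side term by term. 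An alternative, more structural route is to apply the ring-homomorphism property $\varphi(PQ)=\varphi(P)*\varphi(Q)$ from Lemma \ref{star and circ}(3) (equivalently the Proposition following Theorem \ref{New Product on Matrices}) to the classical transpose rule $(y\,\varphi^{-1}(G))^t=\varphi^{-1}(G)^t\,y^t$, converting the ordinary product back into a $*$ product.

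The main obstacle is the bookkeeping of the powers of $\theta$ under transposition. Because $\varphi$ twists the $i$-th row by $\theta^{i-1}$ and transposition turns row indices into column indices, $\varphi$ and transpose do not commute; one must therefore track carefully whether each factor carries $\varphi$ or $\varphi^{-1}$ and keep the sign of the exponent $i-r$ produced by the $*$ product straight. Once the two entrywise expansions are written out, the verification is a routine cancellation of $\theta$-exponents, and the only hypothesis used beyond the formulas of Lemma \ref{star and circ} is the commutativity of $R$, which is precisely what licenses the reordering of the scalar factors $y_r$ and $\theta^{\bullet}(G_{rp})$ in each summand.
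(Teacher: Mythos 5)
Your entrywise strategy is set up correctly, but the step you wave through as ``a routine cancellation of $\theta$-exponents'' is exactly where the argument fails, and it cannot be repaired under the convention you chose. Carry out your own expansion: the left side has $(p,1)$ entry $\sum_r y_r\,\theta^{1-r}(G_{rp})$, while feeding $(\varphi(G)^t)_{pr}=\theta^{r-1}(G_{rp})$ and $(\varphi^{-1}(y^t))_{r,1}=\theta^{1-r}(y_r)$ into the formula $(A*B)_{ij}=\sum_r A_{ir}\theta^{i-r}(B_{rj})$ of Lemma~\ref{star and circ}(2) gives $\sum_r \theta^{r-1}(G_{rp})\,\theta^{p+1-2r}(y_r)$ for the right side. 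Commutativity of $R$ only lets you reorder the two factors in a summand; it cannot turn $\theta^{r-1}(G_{rp})$ into $\theta^{1-r}(G_{rp})$, nor remove the residual twist $\theta^{p+1-2r}$ sitting on $y_r$. In fact, with this reading the identity is simply false: take $R=\mathbb{F}_8$, $\theta(x)=x^2$, $k=2$, $n=1$, $y=(0,1)$, $G=(0,g)^t$; then $(y*G)^t=\theta^{-1}(g)=g^4$, whereas $\varphi(G)^t*\varphi^{-1}(y^t)=(0,\theta(g))*(0,1)^t=\theta(g)=g^2$, and $g^4\neq g^2$ for any $g\notin\{0,1\}$. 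The same example defeats your ``more structural'' alternative: as you yourself observe, $\varphi$ does not commute with transposition, so applying $\varphi(PQ)=\varphi(P)*\varphi(Q)$ to $\varphi^{-1}(G)^t\,y^t$ produces $\varphi^{-1}\bigl(\varphi(\varphi^{-1}(G)^t)*\varphi(y^t)\bigr)$, which is not $\varphi(G)^t*\varphi^{-1}(y^t)$.

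The root of the problem is an inconsistency in the paper that you inherited: Lemma~\ref{star and circ}(2), which you used, gives $y*G=y\,\varphi^{-1}(G)$, but the explicit display before Proposition~\ref{properties of the scalar product} and item (2) of that proposition use the opposite normalization $a*b^t=\sum_r a_r\theta^{r-1}(b_r)$, i.e.\ $x*z^t=x\cdot\varphi(z)$, equivalently $(A*B)_{ij}=\sum_r A_{ir}\theta^{r-1}(B_{rj})$. The paper's proof is the short chain $(y*G)^t=(y\varphi(G))^t=\varphi(G)^t y^t=\varphi(G)^t\varphi(\varphi^{-1}(y^t))=\varphi(G)^t*\varphi^{-1}(y^t)$, whose first and last equalities silently adopt this second normalization; under it both sides expand to $\sum_r \theta^{r-1}(G_{rp})\,y_r$, commutativity being needed precisely for the transpose rule and the final reordering, and the lemma is true. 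So a correct write-up must begin by committing to the $\theta^{r-1}$ convention (the one actually used throughout Section 3); with that fixed, either your entrywise computation or the paper's algebraic one closes, while under the convention you picked no proof exists because the statement itself fails.
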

\begin{proof}
We compute 
$(y*G)^t=(y\varphi(G))^t=\varphi(G)^ty^t
=\varphi(G)^t\varphi(\varphi^{-1}(y^t))\\
=\varphi(G)^t*\varphi^{-1}(y^t)$.	
 This concludes the proof.
\end{proof}
\begin{theorem} \label{thm:LCD}
Let $R$ be a commutative field and $G\in M_{k,n}(R)$.  The code $C=R^k*G$ is a left *-LCD code if and only if 
	$G*\varphi(G)^t\in M_n(R)$ is invertible.
\end{theorem}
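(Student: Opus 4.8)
The plan is to reduce $*$-orthogonality to a single matrix equation and then to recognise the Gram-type matrix $M:=G*\varphi(G)^t$ as the only obstruction, transporting the classical Massey criterion through the ring isomorphism $\varphi\colon (M_k(R),\cdot)\to(M_k(R),*)$.

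First I would describe $C^{*\perp}$ intrinsically. Every $c\in C$ has the form $c=y*G$ with $y\in R^k$, and $*$ is associative since $\varphi$ is a ring isomorphism, so for $x\in R^n$ we get $c*x^t=(y*G)*x^t=y*(G*x^t)$. Letting $y$ range over $R^k$ (e.g. over the standard rows) shows that $c*x^t=0$ for all $c\in C$ if and only if the column $G*x^t\in{}^kR$ vanishes; hence $C^{*\perp}=\{x\in R^n\mid G*x^t=0\}$. Next I would make $M$ appear: for a generic codeword $v=y*G\in C$, the transpose identity of the lemma preceding the theorem, $(y*G)^t=\varphi(G)^t*\varphi^{-1}(y^t)$, together with associativity of $*$, gives
\[
G*v^t=G*\bigl(\varphi(G)^t*\varphi^{-1}(y^t)\bigr)=\bigl(G*\varphi(G)^t\bigr)*\varphi^{-1}(y^t)=M*\varphi^{-1}(y^t).
\]
Combining the two computations, $v=y*G$ lies in $C\cap C^{*\perp}$ exactly when $M*\varphi^{-1}(y^t)=0$, so $C\cap C^{*\perp}=\{\,y*G\mid M*\varphi^{-1}(y^t)=0\,\}$.

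Then come the two implications. For the forward direction, if $M$ is invertible with respect to $*$ (equivalently, by the Corollary, if $\varphi^{-1}(M)$ is invertible in the usual sense, which over the field $R$ is the pertinent notion), then $M*z=0$ forces $z=0$; taking $z=\varphi^{-1}(y^t)$ yields $y=0$ and $v=0$, so $C\cap C^{*\perp}=\{0\}$ and $C$ is $*$-LCD. For the converse I would argue contrapositively: if $M$ is not invertible, then $\varphi^{-1}(M)$ is singular over $R$, so there is a nonzero column $z$ with $M*z=0$ (the $*$-kernel of $M$ is $\varphi$ applied to the classical kernel of $\varphi^{-1}(M)$, since $\varphi^{-1}(M*z)=\varphi^{-1}(M)\varphi^{-1}(z)$). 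Setting $y^t=\varphi(z)$ produces $v=y*G\in C\cap C^{*\perp}$.

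The hard part will be precisely this last step: one must rule out $v=y*G=0$, i.e. guarantee that the constructed codeword is genuinely nonzero. I would close this gap by tracking how invertibility of $M$ controls the rank of $G$: factoring $\varphi^{-1}(M)=\varphi^{-1}(G)\,\varphi^{-1}(\varphi(G)^t)$ through the ring isomorphism shows that invertibility of $M$ forces $\varphi^{-1}(G)$ to have rank $k$, whence the map $y\mapsto y*G$ is injective, so the nonzero $z$ above indeed produces a nonzero $v$ and completes the contrapositive. The subtlety to watch is that $\varphi$ does \emph{not} preserve matrix rank in general (applying distinct powers $\theta^{i-1}$ to distinct rows can raise or lower the rank), so this rank bookkeeping, rather than a naive dimension count on $C$ and $C^{*\perp}$, is the genuine crux of the argument.
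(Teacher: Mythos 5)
Your reduction itself is sound and is essentially the paper's own argument in mirror image: the paper works with the left orthogonal ${}^{\perp*}C$ and row vectors $u$ with $u*M=0$, while you work with the right orthogonal $C^{*\perp}$ and columns $z$ with $M*z=0$; over a field these are equivalent, since a square matrix has a nonzero left kernel iff it has a nonzero right kernel, so the choice is immaterial. Your forward implication is correct as written. The genuine gap is in the step you yourself flag as the hard part, and your proposed repair is circular. In the contrapositive branch you \emph{assume} $M=G*\varphi(G)^t$ is not invertible, and then justify the injectivity of $y\mapsto y*G$ by the implication ``invertibility of $M$ forces $\varphi^{-1}(G)$ to have rank $k$.'' That implication is true, but its hypothesis is exactly what you have negated in this branch, so it yields nothing: the nonzero-ness of $v=y*G$ is never established.

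Moreover, no argument can close this gap from the stated hypotheses alone, because without a full-rank assumption on the presentation the ``LCD $\Rightarrow M$ invertible'' direction is false. Take $R=\F_4$, $\theta$ the Frobenius, $k=n=2$ and $G=\left(\begin{smallmatrix}1&0\\1&0\end{smallmatrix}\right)$, so that $\varphi^{-1}(G)=\left(\begin{smallmatrix}1&0\\1&0\end{smallmatrix}\right)$ as well. Then $C=R^2*G$ is the span of $(1,0)$, and one checks directly that $C^{*\perp}=\{(0,x_2)\mid x_2\in\F_4\}={}^{\perp*}C$, so $C\cap C^{*\perp}=\{0\}$ and $C$ is $*$-LCD; yet $\varphi^{-1}(M)=\varphi^{-1}(G)\,\varphi^{-1}(\varphi(G)^t)=\left(\begin{smallmatrix}1&1\\1&1\end{smallmatrix}\right)$ is singular, so $M$ is not $*$-invertible. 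The missing ingredient is the convention set up in the paper just before the lemma: $C=R^k*G$ is presented by a \emph{basis}, i.e.\ the rows of $\varphi^{-1}(G)$ are linearly independent, equivalently $y\mapsto y*G=y\,\varphi^{-1}(G)$ is injective. With that standing assumption your nonzero $z$ gives a nonzero $y^t=\varphi(z)$ and hence a nonzero $v=y*G\in C\cap C^{*\perp}$, and the converse goes through. (To be fair, the paper's proof leaves the same point implicit---it asserts ``a contradiction'' without checking $v*G\neq 0$---but the justification there is this generator-matrix convention, not any rank consequence of the invertibility of $M$, which is what your write-up appeals to.)
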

\begin{proof}
	Assume that the matrix $G*\varphi(G)^t\in M_n(R)$ is invertible,  and let $mathbf{c}\in C\cap ^{\perp,*}C$.  There exists $u\in R^k$ such that $mathbf{c}=u*G$.   Since $mathbf{c}\in ^{\perp,*}C$, we have that for every $y\in R^k$, $mathbf{c}*(y*G)^t=0$.  We thus get that $0=(u*G)*(y*G)^t$.  The above lemma then gives
	$0=u*G*\varphi(G)^t*\varphi^{-1}(y^t) $.   Since $y\in R^k$, our assumption gives that
	$u=0$ and hence $c=0$.
	Conversely, let us show that if $C\cap ^{\perp,*}C=\{0\}$ then $G*\varphi(G)^t$ is invertible.  Assume to the contrary that $G*\varphi(G)^t$ is not invertible and let $v\in R^k$ be such that $v*G*\varphi(G)^t=0$.  
	For any $e=e'*G\in C$, we have $v*G*e^t=v*G*(e'*G)^t*=v*G*\varphi(G)^t*\varphi^{-1}
	(e'^t)=0$.  We thus have that $v*G\in C\cap ^{\perp,*}C$, a contradiction.  	
\end{proof}

\begin{remarks}
{\rm
	The above result has analogues in all the three remaining orthogonals viz. $C^{*,\perp}, ^{\perp, \circ}C$ and $C^{\circ, \perp}$.   Notice that for the right orthogonals $C^{*,\perp}, C^{\circ,\perp}$ we have to use a right linear code $C$ defined by a matrix $H\in M_{n,k}(R)$, i.e. $C=HR^k$.
}
\end{remarks}

\subsection {Results and computation}
	In this subsection, we present examples of good left LCD  codes and dual code $C^{\perp*}$ form the from   Theorem \ref{thm:LCD} and  Proposition \ref{pro:L-LCD}  ( $\varphi^{-1}(C^{\perp})=C^{*\perp}$) over $(GF(4))$,  $(GF(8))$, $(GF(9))$ and  $(GF(16))$ . These codes  ( left LCD) are either optimal or have the same parameters as best known linear codes available in the database \cite{LCD3}.\\
\begin{table}[h]
	\caption{Examples of best known and optimal left-LCD codes over $GF(4)$ and $GF(8)$ }
	\begin{tabular}{|c|c|c|} 
		\hline
		\textbf{Generator Matrix} & $[n,k,d]_q$ & \textbf{Type of Codes}\\
		\hline
			$ \left(\begin{array}{ccccccc} 1&0&0&0&w^5&w^2&1\\0&1&0&0&w^2&w^2&1\\0&0&1&0&w^4&w^6&w^6\\
		0&0&0&1&w^6&1&w^6 \end{array}\right)$ & $[7,4,4]_8$ & $C^{\perp,*}$ Left-LCD \\
		\hline
		$ \left(\begin{array}{ccccccc} 1&0&0&0&w^2&0&w^2\\0&1&0&0&1&1&0\\0&0&1&0&w^2&w&w^2\\0&0&0&1&0&w^2&1 \end{array}\right)$ & $[7,4,3]_4$ & $C^{\perp,*}$ Left-LCD \\
		\hline
	$ \left(\begin{array}{ccccccc}
	1&0&0&w^2&w^2&w^2&1\\0&1&0&1&1&0&1\\0&0&0&1&w^2&w^2&w^2 \end{array}\right)$ & $[7,3,4]_4$ & $^{\perp *}C$\\
		\hline
		
		$ \left(\begin{array}{cccccccccc}
		1&0&0&w^2&w^2&w^3&w^5&w^6&w^4&w\\0&1&0&w^5&w^4&w^2&w&w^4&w^3&w^6\\
		0&0&1&w&w^5&w^6&w^6&w^4&w&w^4
		\end{array}\right)$ & $[10,3,8]_8$ & $C^{\perp *}$ Left-LCD\\
		\hline
			$ \left(\begin{array}{ccccccc}
		1&0&0&w^3&w^3&w^2&w^6\\0&1&0&w^4&w^5&1&w^2\\0&0&1&w^4&w^4&w^4&w^4 \end{array}\right)$ & $[7,3,4]_8$ & $^{\perp *}C$\\
		\hline
		$ \left(\begin{array}{ccccccccccc}
	1&0&0&0&0&0&w^2&1&1&w^2&w\\0&1&0&0&0&0&w^2&w&0&w&0\\0&0&1&0&0&0&0&1&w^2&w^2&1\\0&0&0&1&0&0&0&0&w&w^2&1\\0&0&0&0&1&0&w^2&0&0&1&w^2
	\\0&0&0&0&0&1&w^2&w^2&w&1&w^2 \end{array}\right)$ & $[11,6,4]_4$ & $C^{\perp,*}$ Left-LCD\\
		\hline	
	$ \left(\begin{array}{ccccccccccc}
	1&0&0&0&0&w&1&w^2&0&0&0\\0&1&0&0&0&w^2&0&1&w^2&w^2&0\\0&0&1&0&0&0&0&w&1&w&1\\0&0&0&1&0&0&0&w&0&w^2&w^2\\0&0&0&0&0&0&1&1&0&w&w^2
 \end{array}\right)$ & $[11,6,4]_4$ & $^{\perp *}C$\\
	\hline	
	\end{tabular}
\end{table}
\begin{center}
\begin{table}
	\caption{Examples of best known and optimal Left  LCD codes over  $GF(9)$ and $GF(16)$ }
	\begin{tabular}{|c|c|} 
		\hline
		\textbf{Generator Matrix} & $[n,k,d]_q$\\
		\hline
		$ \left(\begin{array}{ccccccccccccccccccc} 1&0&0&0&w^7&0&w&w^5&w&2&w^3&w^3&w^3&w^3&0&1&w^2&w^3&1\\
		0&1&0&0&w^6&0&w^2&w^3&w^2&0&w^6&w^2&w&1&w^5&w^3&0&2&1\\
		0&0&1&0&1&0&2&w^3&0&w^5&w^3&0&w^3&1&2&w^6&w^6&w^5&w^5\\
		0&0&0&1&w^5&0&w^3&w^2&w&w^3&1&w^2&w^7&w^2&w^3&2&w^5&w^3&w^7\\
		0&0&0&0&0&1&w^7&2&w^2&w^3&1&w^7&w^5&w^5&w&w^6&w^7&w&w 
		\end{array}\right)$
		 & $[19,5,12]_{9}$ \\
		\hline
		
		$ \left(\begin{array}{ccccccccccccccccccc} 1&0&0&0&w^7&0&w&w^5&w&2&w^3&w^3&w^3&w^3&0&1&w^2&w^3&1\\
		0&1&0&0&w^2&0&w^6&w&w^6&0&w^2&w^6&w^3&1&w^7&w&0&2&1\\
		0&0&1&0&0&1&0&2&w^3&0&w^5&w^3&0&w^3&1&2&w^6&w^5&w^5\\
		
		\end{array}\right)$
		& $[19,3,13]_{9}$ \\
		
		\hline
			$ \left(\begin{array}{ccccccccccccccccccc}
			1&0&0&0&0&1&w^7&2&w^2&w^3&1&w^7&w^5&w^5&w&w^6&w^7&w&w \\ 0&1&0&0&w^5&0&w^3&w^7&w^3&2&w&w&w&w&0&1&w^6&w&1\\
		0&0&1&0&w^2&0&w^2&w^3&w^2&0&w^6&w^2&w&1&w^5&w^3&0&2&1\\
		0&0&0&1&1&0&2&w&0&w^7&w^w&0&w^2&1&2&w^2&w^2&w^7&w^7\\
		0&0&0&0&w^5&0&w^3&w^2&w&w^3&1&w^2&w^7&w^2&w^3&2&w^5&w^3&w^7\\
		
		\end{array}\right)$
		& $[19,5]_{9}$ \\
		\hline
		
		$ \left(\begin{array}{ccccccccccccccccc}
		1&0&0&0&0&0&0&w^10&w^4&w^6&w^4&w^{13}&w&w^9&w^{13}&w^5&1\\
		0&1&0&0&0&0&w^{12}&w^{10}&w^9&w^{11}&w^{11}&w^{12}&w^2&w^{13}&w^{12}&w^{12}&w^{12}\\
		0&0&1&0&0&0&w^{14}&w^7&w^8&w^9&w^{10}&w&w^{10}&w^5&w^{12}&1&w\\
	   0&0&0&1&0&0&w^3&w^3&w^9&w^{12}&w^6&w^3&w^2&w^3&1&w^3&1\\
	   0&0&0&0&1&0&w^5&w^2&w^{12}&w^5&w^{10}&w^{12}&w^4&w^6&w^9&w^{10}&w^{12}\\
	   0&0&0&0&0&1&w^{10}&1&w^{10}&w^5&w^{10}&1&w^{10}&1&w^5&1&1
		\end{array}\right)$ & $[17,6,10]_{16}$\\
		\hline	
		
	\end{tabular}
\end{table}
\end{center}
\section {Acknowledgement}
This work was supported by  The International Mathematical Union (IMU Abel Visiting Scholar Program 2024). We would like to thank Professor  Steven Dougherty  for correcting misprints
in a previous version of this paper.

\bibliographystyle{abbrv}

\begin{thebibliography}{99}
\bibitem {LCD4} C. Carlet and S. Guilley. Complementary dual codes for counter-measures to side-channel attacks. In ICMCTA, pages 97-105. Springer, 2014.
\bibitem {LCD3} M. Grassl. Code tables: Bounds on the parameters of codes.
\bibitem {LCD1} M. Habibi, A. Moussavi, and A. Alhevaz. On skew triangular matrix rings. In Algebra Colloquium, World Scientic, volume 22, pages 271-280., 2015.
\bibitem {LCD2} J. L. Massey. Linear codes with complementary duals. Discrete Mathematics, 106:337-342,1992.
\bibitem{LCD5} Calderbank, A Robert and Rains, Eric M and Shor, Peter M and Sloane, Neil JA, IEEE Transactions on Information Theory,44, 4,1369--1387, 1998.  
\end{thebibliography}

\end{document}